\title{Lamplighters admit weakly aperiodic SFTs.}
\author{David Bruce Cohen}
\def\ZZ{{\mathbb{Z}}}
\def\ZZgo{{\mathbb{Z}_{\geq 0}}}
\DeclareMathOperator{\NN}{\ensuremath{\mathbb{N}}}
\def\Aut{{\text{Aut}}}
\def\To{{\rightarrow}}
\def\la{{\langle}}
\def\ra{{\rangle}}
\def\Maj{{\text{Maj}}}
\def\nnn{{\mathfrak{n}}}
\def\pil{{\pi_\ell}}
\def\Cong{{\text{Con}_G}}
\def\Stab{{\text{Stab}}}
\def\zeroone{{\{0,1\}}}
\def\Gam{{\Gamma}}
\def\reppy{{\bigoplus_{\ZZ}\Lambda}}
\def\repmin{{\bigoplus_{-\mathbb{N}}\Lambda}}
\def\Lam{{\Lambda}}
\def\lam{{\lambda}}
\def\muv{{{\buildrel \rightarrow\over\mu}}}
\def\sig{{\sigma}}
\def\sigo{{\sigma_0}}
\def\rolemod{{\mathfrak{RM}}}
\def\ellint{{\left\{0,\ldots,\ell-1\right\}}}
\theoremstyle{plain}
\newtheorem{theorem}{Theorem}[section]
\newtheorem{lemma}[theorem]{Lemma}
\newtheorem{proposition}[theorem]{Proposition}
\newtheorem{definition}[theorem]{Definition}
\begin{document}

\maketitle

\begin{abstract}
Let $A$ be a finite set and $G$ a group. A closed subset $X$ of $A^G$ is called a subshift if the action of $G$ on $A^G$ preserves $X$. If $K$ is a closed subset of $A^G$ such that membership in $K$ is determined by looking at a fixed finite set of coordinates, and $X$ is the intersection of all translates of $K$ under the action of $G$, then $X$ is called a subshift of finite type (SFT). If an SFT is nonempty and contains no finite $G$-orbits, it is said to be weakly aperiodic. A virtually cyclic group has no weakly aperiodic SFT, and Carroll and Penland have conjectured that a group with no weakly aperiodic SFT must be virtually cyclic. Answering a question of Jeandel, we show that lamplighters always admit weakly aperiodic SFTs.
\end{abstract}

\section{Introduction.}
\paragraph{Subshifts.} Let $A$ be a discrete finite set. For any set $F$, let $A^F$ have the usual meaning of $\{\sig:F\To A\}$---that is $A^F$ is the set of all functions from $F$ to $A$. The full $A$-shift on a group $G$ is the set $A^G$ equipped with the product topology (equivalently, the topology of pointwise convergence) and the right $G$-action given by $(\sig\cdot g)(h):=\sig(gh)$. If a closed subset $X\subseteq A^G$ is preserved by the $G$-action, then $X$ is called a {\bf subshift.}

\paragraph{SFTs.} The simplest means of producing a subshift is as follows. A {\bf pattern} is a function from a finite subset $F\subseteq G$ to $A$. Let $p_1:F_1\To A,\ldots,p_n:F_n\To A$ be patterns, and let $K\subseteq A^G$ consist of all $\sig\in A^G$ such that for $i=1,\ldots,n$, we have that the restriction $\sig|_{F_i}$ is not identically equal to $p_i$---in other words, for $\sig$ to be in $K$, we must have, for each $i$, some $g_i\in F_i$ such that $\sig(g_i)\neq p_i(g_i)$. Then $X:=\bigcap_{g\in G} K\cdot g$ is called a {\bf subshift of finite type} (SFT). More specifically, we say that $X$ is the SFT carved out by forbidden patterns $p_1,\ldots,p_n$.

\paragraph{Weak aperiodicity.} An SFT $X\subseteq A^G$ is said to be {\bf weakly aperiodic} if it is nonempty and does not contain any configuration $\sig\in A^G$ which is fixed by a finite index subgroup of $G$. It is not hard to see that there are no weakly aperiodic SFTs over the group $\ZZ$: if $X\subseteq A^\ZZ$ is a nonempty SFT, then it must contain some element $\sig_0$. Considered as a bi-infinite word, $\sig_0$ must contain two disjoint copies of some word $w$ which is longer than all the forbidden patterns used to define $X$. If the minimal subword of $\sig_0$ containing these instances of $w$ is written as a concatenation of words $wvw$, then it is clear that $\ldots wvwvwv\ldots$ represents a periodic element of $X$.

\begin{figure}[t]
\labellist
\small\hair 2pt

%\pinlabel $\infty\quad\text{ends}$ at 610 -5
\endlabellist

\centering
\centerline{\psfig{file=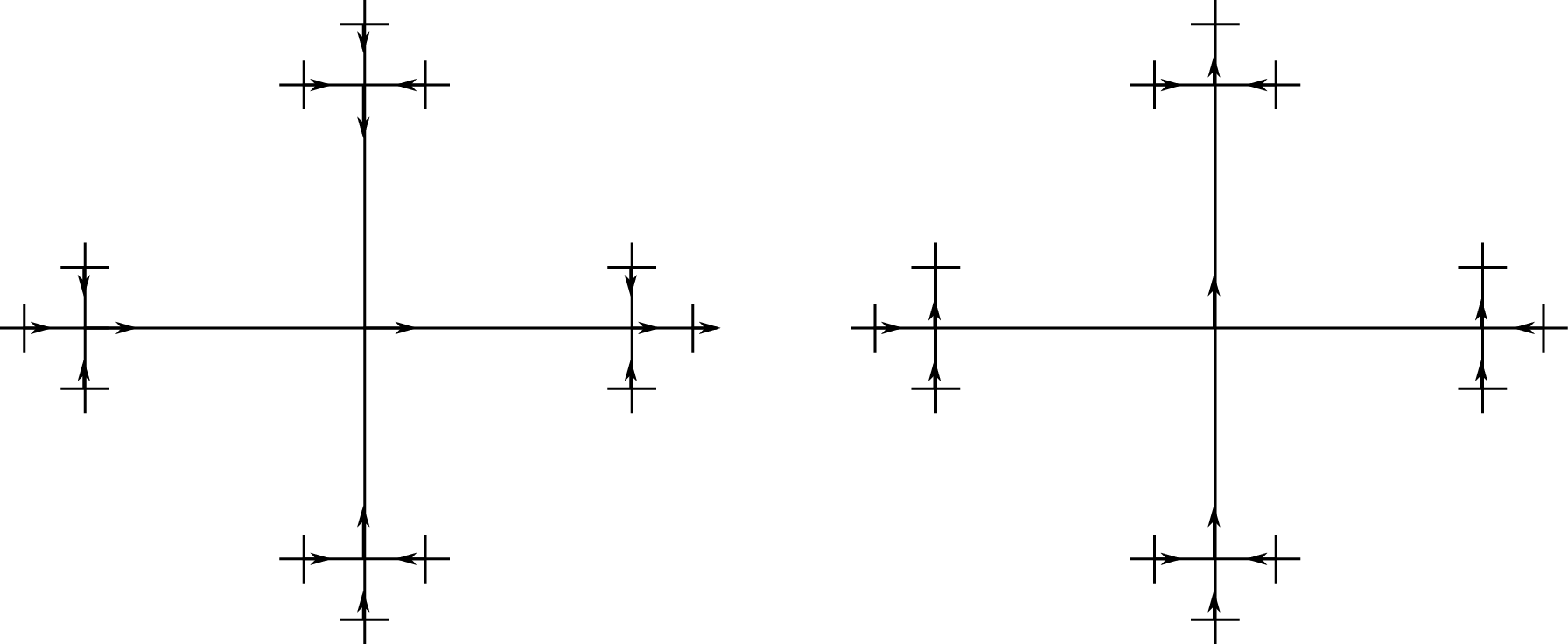,scale=90}}
\caption{The figure on the left depicts a Ponzi scheme, the figure on the right does not, as there is only one arrow pointing to the central vertex. The space of Ponzi schemes on the free group is a weakly aperiodic SFT.}
\label{figure:ponzi}
\end{figure}

On the other hand, one can construct a weakly aperiodic SFT on the free group $\la a|\ra\ast\la b|\ra$ as follows. Let the alphabet $A$ equal $\left\{a,a^{-1},b,b^{-1}\right\}$, and let $X\subseteq A^G$ consist of all configurations $\sig$ such that, for every $g\in G$ we have
$$\#\left\{s\in A: gs\sig\left(gs\right)=g\right\}\geq 2.$$
Given $\sigma\in A^G$ we may draw, for each $g\in G$, an arrow pointing from $g$ to $g\sigma(g)$ in the Cayley graph. In this interpretation, $\sigma$ being in $X$ is equivalent to the condition that every vertex of the Cayley graph is pointed to by at least two other vertices. Such an assignment of arrows to vertices of a graph is called a Ponzi scheme. One may construct elements of $X$ by hand (see Figure \ref{figure:ponzi}), and no element of $X$ can be fixed by a finite index subgroup because finite graphs do not have Ponzi schemes.

\paragraph{The conjecture of Carroll and Penland.} It is natural to ask which groups admit weakly aperiodic SFT. Carroll and Penland \cite[\S 4.1]{carpen} have conjectured that a group which is not virtually cyclic must admit a weakly aperiodic SFT. It is know that a counterexample to this conjecture---that is, a non virtually cyclic group with no weakly aperiodic SFT---must have a number of properties.
 
\paragraph{Restrictions on possible counterexamples.} Let $G$ be a group with no weakly aperiodic SFT. We now list some facts known about $G$.
\begin{itemize}
\item $G$ must be residually finite \cite[Proposition 3.2]{jeandel}. To see this, consider, for each $n\in\NN$ the SFT $X$ consisting of all $\sig\in A^G$ such that $\sig(g)\neq\sig\left(g'\right)$ for any $g'$ in the $n$-ball around $g$ in the Cayley graph of $G$. If $A$ is sufficiently large (depending on $n$), this subshift is nonempty. If some $\sig\in X$ is fixed by a finite index subgroup $\Gam\subseteq G$, then $\Gam$ has trivial intersection with the $n$-ball around the identity in $G$. Letting $n$ go to infinity, we see that every non-identity element of $g$ is excluded by some finite index subgroup.
\item Jeandel has shown that for all $n\in\NN$, $G$ must have finite index subgroup of index divisible by $\NN$ \cite[Corollary 3.3]{jeandel}. See also \cite{mn}.
\item Jeandel has shown that $G$ must be amenable \cite[Corollary 3.1]{jeandel}. See also \cite{bw}.
\item $G$ cannot be isomorphic to any Baumslag-Solitar group $BS(m,n)$ \cite[Theorem 8]{ak} or to $\ZZ^2$ \cite{berger}. Explicit examples of weakly aperiodic SFTs are known for those cases.
\item If $G$ is infinite and finitely presented, it must virtually surject onto $\ZZ$. We will now sketch the proof of this fact. \cite[\S 3]{thesis} shows that the space of ``derivatives" of $1$-Lipschitz functions from $G$ to $\ZZ$ is a subshift of finite type, where $G$ is endowed with the word metric with respect to some fixed finite generating set. We say that a $1$-Lipschitz function $f:G\To\ZZ$ has no local minimum if for all $g\in G$, there is some $g'$ at distance $1$ from $g$ in the Cayley graph such that $f\left(g'\right)=f(g)-1$. It is clear then that the space of derivatives of $1$-Lipschitz functions $G\To\ZZ$ with no local minimum is also an SFT. This SFT is nonempty because it contains points in the orbit closure of the derivative of the distance-from-the-identity function $g\mapsto d(g,1_G)$---this function has a unique local minimum, and thus the orbit closure of its derivative contains derivatives of $1$-Lipschitz functions with no local minimum. Since every nonempty SFT over $G$ contains an element fixed by some finite index subgroup, it follows that some finite index subgroup of $G$ must homomorphically surject onto $\ZZ$.
\item If $G$ is finitely presented, it must be QI-rigid in the sense described in \cite[Theorem 1.10]{thesis}. Namely, if a group $H$ is quasi-isometric to $G$, then for some finite subgroup $K\subseteq G$, we must have that $G/K$ is isomorphic to a finite index subgroup of $H$.
\item No subgroup of $G$ admits a weakly aperiodic SFT, since the intersection of a subgroup $H$ of $G$ with a finite index subgroup $\Gam$ of $G$ will be finite index in $H$. It is unclear to whom this fact should be attributed.
\item In fact, Jeandel has shown that no finitely presented group $H$ acting translation-like on $G$ admits a weakly aperiodic SFT \cite[Theorem 3]{jeandel}. A translation-like action of $H$ on $G$ is a free action by maps at a finite distance from the identity in the uniform metric. This notion was introduced by Whyte \cite{whyte} for the purpose of generalizing subgroups---if $H$ is a subgroup of $G$, then $H$ acts translation like on $G$ via $h\cdot g=gh^{-1}$ for $h\in H$ and $g\in G$. Jeandel's theorem, together with the fact that $\ZZ^2$ admits no weakly aperiodic SFT, implies that $G$ cannot be isomorphic to the direct product of two infinite finitely generated groups.
\item $G$ cannot be commensurable to a group with a weakly aperiodic SFT \cite[Theorem 11]{carpen}. %As the Grigorchuk group is virtually isomorphic to a direct product of infinite finitely-generated groups, we see that $G$ cannot be isomorphic to the Grigorchuk group.
\end{itemize}

\paragraph{Lamplighters.} Lamplighters---i.e., wreath products of $\ZZ$ with finite groups---comprise perhaps the only well known class of groups which have all the above properties. Jeandel has asked whether lamplighters have weakly aperiodic SFTs \cite[\S 5]{jeandel}. In this paper, we give a positive answer.

%\paragraph{Main theorem.}
\begin{theorem}
\label{theorem:main}
Every lamplighter admits a weakly aperiodic SFT.
\end{theorem}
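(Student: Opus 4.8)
The plan is to build a nonempty SFT on the lamplighter $L=\reppy\rtimes\ZZ$ that admits an $L$-equivariant continuous surjection onto a \emph{target} $L$-system in which no point is fixed by a finite index subgroup. This suffices, because if $\pi\colon X\to Y$ is equivariant then $\Stab(x)\subseteq\Stab(\pi x)$ for every $x\in X$, so an SFT mapping onto such a target automatically contains no finite-index-periodic configuration, i.e. is weakly aperiodic (it is nonempty as soon as its image is). So the task divides into (i) choosing a good target system on which $L$ acts with only infinite index point stabilizers, and (ii) realizing that target — or a suitable extension of it — as a factor of an SFT over $L$.

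For (i) the natural candidate is the full shift $\Lam^{\ZZ}$, on which $\ZZ\le L$ acts by the shift $\tau$ and $\reppy\le L$ acts by finitely supported pointwise translation, so that $(f,n)$ acts by $\omega\mapsto\tau^{n}\omega+f$. This really is an action of the semidirect product, and if $(f,n)$ fixes $\omega$ with $n=0$ then $\omega+f=\omega$, forcing $f=0$; hence $\Stab(\omega)\cap\reppy=\{1\}$, the projection $\Stab(\omega)\to L/\reppy\cong\ZZ$ is injective, and the $\reppy$-translates of $\Stab(\omega)$ are pairwise distinct cosets, so $\Stab(\omega)$ has infinite index in $L$ for every $\omega$. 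Thus $(\Lam^{\ZZ},L)$ has no configuration fixed by a finite index subgroup, as required.

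Step (ii) is the heart of the matter, and I expect it to contain essentially all of the difficulty. The obvious encoding $\pi(\omega)(f,n)=\bigl(\omega(n+j)-f(n+j)\bigr)_{|j|\le R}$ is $L$-equivariant and injective, but its image is not an SFT: the vertices $(f,n)$ and $(f+\delta_{n'},n)$ differ only in the lamp at position $n'$, yet their distance in $L$ is $\asymp|n-n'|$, so no bounded forbidden pattern can enforce that their encodings agree outside coordinate $n'$. What I would do is design an SFT carrying auxiliary ``bookkeeping'' data so that purely local rules do two things at once: first, force each $t$-line to carry a coherent bi-infinite word over $\Lam$ on which a lamp flip at the current position acts by changing exactly one letter (this part is genuinely local, being a handful of nearest-neighbour relations among the encoding layers); and second, propagate the coherence of these words across the other directions of the Cayley graph, exploiting the fact that in the lamp direction $L$ is a horocyclic product of two $(|\Lam|+1)$-regular trees, so that tree branching can be used to route the needed consistency certificates to where local rules can check them. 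Nonemptiness of the resulting SFT should then be routine: any $\omega\in\Lam^{\ZZ}$ gives a configuration satisfying the base rules, and the auxiliary layers can be filled in inductively along the trees.

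If a direct SFT cover proves too delicate, there is an alternative route that leans on the tools already in play: verify that $\ZZ^{2}$ — or some Baumslag--Solitar group $BS(1,q)$, each finitely presented and known to admit a weakly aperiodic SFT (\cite{berger}, \cite{ak}) — acts translation-like on every lamplighter, and then invoke Jeandel's theorem \cite{jeandel}. The subtlety in that approach is that such an action cannot take right translation by $t$ as one of its commuting factors: a bounded-displacement bijection commuting with $g\mapsto gt$ must fix every $t$-line setwise (otherwise, since the support to be corrected stays fixed while the basepoint runs off to infinity, the displacement is unbounded), so it reduces to translations along the lines and cannot generate a free $\ZZ^{2}$. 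Hence one would have to thread the grid ``diagonally'' through $L$, which again makes the combinatorial construction the crux.
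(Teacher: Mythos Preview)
Your proposal is a strategy outline, not a proof. Step (i) is fine: the $L$-action on $\Lam^{\ZZ}$ is a legitimate action and your stabilizer argument is correct. But you yourself identify step (ii) as ``the heart of the matter'' containing ``essentially all of the difficulty,'' and then you do not carry it out. Phrases like ``What I would do is design an SFT carrying auxiliary `bookkeeping' data\ldots'' and ``Nonemptiness of the resulting SFT should then be routine'' are not a construction. You have noticed a genuine obstruction---that the naive encoding of $\Lam^{\ZZ}$ into $A^{L}$ is not of finite type because lamp flips at distant positions are invisible to any bounded window---but you have not overcome it. Your alternative route via a translation-like $\ZZ^{2}$-action on $L$ suffers the same defect: you correctly observe that the obvious attempt fails and that one must ``thread the grid diagonally,'' and then stop.

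By contrast, the paper does something quite different and entirely explicit. For $\#\Lam\geq 3$ it defines a single SFT, the \emph{conformist subshift} $\Cong\subseteq\{0,1\}^{G}$: the rule is that on each role-model set $\rolemod(g)=gt^{-1}\Lam_{0}$ the labels must form a non-unanimous strict majority, and $\sigma(g)$ must equal that majority. Nonemptiness is witnessed by an explicit configuration built from the base-$\ell$ digit-parity sequence. Aperiodicity is proved directly: the role-model relation forces $\sigma$ to be constant along $\bigoplus_{-\NN}\Lam$, and a short group-theoretic argument shows that any finite-index normal subgroup of $G$ together with $\bigoplus_{-\NN}\Lam$ generates all of $\reppy$, so a $\Gamma$-periodic $\sigma$ would be constant on $\Lam_{0}=\rolemod(t)$, violating the non-unanimity clause. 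The case $\#\Lam=2$ is handled by passing to a finite-index subgroup and invoking \cite{carpen}. No factor maps, no auxiliary bookkeeping layers, and no translation-like actions are needed.
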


\begin{proof}
For finite $\Lambda$ with $\#\Lambda\geq 3$, we construct an SFT $\Cong$ on the lamplighter $G:=\reppy\rtimes\ZZ$ in Definition \ref{definition:conformist}. We show that this SFT is nonempty in Proposition \ref{proposition:nonempty} and that it contains no finite orbits in Proposition \ref{proposition:aperiodic}.

To handle the case where $\#\Lambda=2$, we appeal to \cite[Theorem 11]{carpen}, which implies that to find a weakly aperiodic SFT on $G$, it suffices to find one on a finite index subgroup of $G$. The lamplighter $\bigoplus_\ZZ\ZZ/2\ZZ\rtimes\ZZ$ has a finite index subgroup isomorphic to the lamplighter $\bigoplus_\ZZ(\ZZ/2\ZZ\oplus\ZZ/2\ZZ)\rtimes\ZZ$, which has a weakly aperiodic SFT per the above considerations
\end{proof}

\subsection{Organization.} In \S \ref{section:preliminary} we define the lamplighter group $G=\reppy\rtimes\ZZ$ and establish the basic notation used throughout the paper. In \S \ref{section:construction} we construct, for $\#\Lam\geq 3$, an SFT $\Cong\subseteq\zeroone^G$ which we shall show to be weakly aperiodic. In \S \ref{section:nonempty} we show that $\Cong$ is nonempty, and in \S \ref{section:aperiodic} we show that no point of $\Cong$ may have finite index stabilizer in $G$, thus establishing weak aperiodicity. Finally, in \S \ref{section:questions} we discuss possible generalizations and directions for future work.

\subsection{Acknowledgments.} This work was supported by NSF award 1502608. We wish to thank Yongle Jiang for two important corrections.

\section{Preliminaries.}
\label{section:preliminary}
\paragraph{Background notation for lamplighters.} Let $\Lambda$ be a finite group with cardinality $\ell$. We shall need the following notation.
\begin{itemize}
\item For $i\in\ZZ$, let $\Lambda_i$ denote a copy of $\Lambda$, and let $\reppy$ denote the direct sum $\ldots \Lambda_{-1}\oplus\Lambda_0\oplus\Lambda_1\ldots$.
\item Let $\repmin\subseteq\reppy$ be the subgroup generated by $\Lam_{-1},\Lam_{-2}\ldots$.
\item For $\muv\in\reppy$, let $\left(\muv\right)_i\in\Lam$ denote the $i$-th coordinate of $\muv$.
\item For $\lam\in\Lam,$ the element $[\lambda]_i\in\Lambda_i\subseteq\reppy$ is defined so that $\left([\lam]_i\right)_i=\lam$ (and all other coordinates $\left([\lam]_i\right)_j$ of $[\lam]_i$ are equal to the identity $1\in\Lam$).
\end{itemize}

Let $G$ be the lamplighter group $\reppy\rtimes\ZZ$, where the generator $t$ of $\ZZ$ acts by
$$t[\lam]_i t^{-1}=[\lam]_{i+1}.$$
In $G$, it is clear that $t[\lam]_i=[\lam]_{i+1}t$ for any $i\in\ZZ$, and that $[\lam]_i[\lam]_j=[\lam]_j[\lam]_i$ when $i\neq j$. These facts will be used frequently without comment. Note that every element $g\in G$ may be uniquely expressed as $\muv t^k$ for some $\muv\in\reppy$ and $k\in\ZZ$. We will write $1_\Lam$ for the identity of $\Lam$ and $1_G$ for the identity of $G$.

\paragraph{Role models.}
For any $g\in G$, let $\rolemod(g)$ denote the right coset $gt^{-1}\Lam_0$. The letters $\rolemod$ stand for ``role model"---in Definition \ref{definition:conformist} we will define a subshift such that for $\sigma$ in this subshift and $g\in G$, $\sigma(g)$ is always determined by $\sigma|_{\rolemod(g)}$.

\section{Defining the conformist subshift.}
\label{section:construction}
\paragraph{Non-unanimous strict majority.} Given a finite set $F$, and $\sigma\in \{0,1\}^F$, if $\sigma$ satisfies the strict inequalities
$$\frac{1}{2}\#F<\#\{x\in F: \sigma(x)=a\}<\#F$$
for some $a\in\{0,1\}$, we say that $\sigma$ has a non-unanimous strict majority (NUSM) and write
$$\Maj(\sigma)=a.$$
If $\sigma$ satisfies these inequalities for $a=1$, it cannot satisfy them for $a=0$, and vice versa. If $\#F=2$, then no $\sigma\in\{0,1\}^F$ has a NUSM, but if $\#F\geq 3$, then there exist $\sigma\in\{0,1\}^F$ with a NUSM. This is why we assume that $\ell\geq 3$ in the following definition.

\begin{figure}[t]
\labellist
\small\hair 2pt

\pinlabel $g$ at 15 120
\pinlabel $g[\lambda]_1$ at 75 140
%\pinlabel $g[\lambda^2]_1$ at 135 160
\pinlabel $\rolemod(g)=\rolemod(g[\lambda]_1)$ at 75 0

\pinlabel {Forbidden ($\sig|_{\rolemod(g)}$ does not have a NUSM.)} at 280 163
\pinlabel $g$ at 233 142
\pinlabel {Forbidden ($\sig(g)\neq\Maj(\sig|_{\rolemod(g)})$).} at 280 64
\pinlabel $g$ at 265 45

\pinlabel Allowed at 478 163
\pinlabel Allowed at 478 64

\endlabellist

\centering
\centerline{\psfig{file=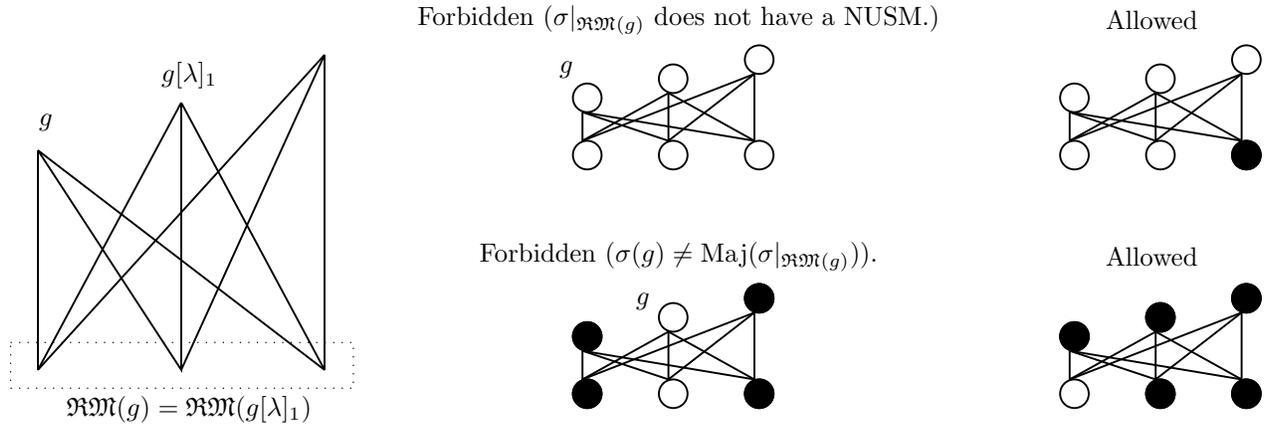,scale=90}}
\caption{At left, we depict the relationship between $g$ and $\rolemod(g)$ in a fragment of the Cayley graph of $G$ with respect to the generating set $\Lam_0 t$. The other columns depict forbidden patterns for $\Cong$ and patterns which may occur in $\Cong$ respectively, where filled and unfilled circles represent $0$ and $1$ respectively.}
\label{figure:conformistdefinition}
\end{figure}

\begin{definition}
\label{definition:conformist}
Suppose that $\ell\geq 3$. The conformist subshift $\Cong\subseteq\{0,1\}^G$ of $G$ is defined to consist of all $\sigma\in\zeroone^G$ such that for all $g\in G$, the restriction $\sigma|_{\rolemod(g)}$ of $\sigma$ to the finite set $\rolemod(g)$ has a NUSM and $\sigma(g)=\Maj\left(\sigma|_{\rolemod(g)}\right)$.
\end{definition}

\paragraph{The conformist subshift is an SFT.} It is evident that $\Cong$ is an SFT with forbidden patterns defined on the set
$$\{1\}\cup \left\{t^{-1}\Lam_0\right\}.$$ Our goal is to show that $\Cong$ is nonempty and weakly aperiodic. In any configuration of $\Cong$, the label of $g\in G$ must conform to the label assumed by most of the role models $\rolemod(g)$ of $g$---whence the name ``conformist subshift". The condition that the labels of $\rolemod(g)$ must be non-unanimous is necessary to prevent fixed points.

\section{Nonemptiness of the conformist subshift.}
\label{section:nonempty}
We first show that $\Cong$ is nonempty by explicitly constructing an element $\sigo$ of $\Cong$. The strategy is as follows.
\begin{itemize}
\item In Definition \ref{definition:nnn}, we define $\nnn:G\To\ZZgo$. In Definition \ref{definition:bell}, we define $b_\ell:\ZZgo\To\zeroone$. The composition of $b_\ell$ and $\nnn$ will give us $\sigo$, our putative element of $\Cong$ (see Definition \ref{definition:sigo}).
\item In Lemma \ref{lemma:nnn}, we show that for any $g\in G$, the restriction $\nnn|_{\rolemod(g)}$ is a bijection onto the set of integers $\left\{\nnn(g)\ell,\ldots,\nnn(g)\ell+\ell-1\right\}$.
\item In Lemma \ref{lemma:bell}, we show that for any $n\in\ZZgo$, the restriction $b_\ell|_{\left\{n\ell,\ldots,n\ell+\ell-1\right\}}$ has a NUSM and $\Maj\left(b_\ell|_{\left\{n\ell,\ldots,n\ell+\ell-1\right\}}\right)=b_\ell(n)$.
\item By combining the previous two lemmas, we show in Proposition \ref{proposition:nonempty} that $\sigo\in\Cong$.
\end{itemize}

\begin{definition}
\label{definition:bell}
For $n\in\ZZgo$, let $b_\ell(n)$ be $0$ if $n$ has an even number of $1$s in its base-$\ell$ expansion, and $1$ if $n$ has an odd number of $1$s in its base-$\ell$ expansion.
\end{definition}

\paragraph{Example.} %By convention, $0$ has no $1$s in its base-$\ell$ expansion.
 For $\ell=4$, the first few values of $b_\ell$ are given in the bottom row of following table.

\begin{tabular}{c|ccccccccccccccccc}
$n$ & $0$ & $1$ & $2$ & $3$ & $4$ & $5$
    & $6$ & $7$ & $8$ & $9$ & $10$ & $11$
    & $12$ & $13$ & $14$ & $15$ & $16$  \\
$n$ base-$4$ & $0$ & $1$ & $2$ & $3$ & $10$ & $11$
    & $12$ & $13$ & $20$ & $21$ & $22$ & $23$
    & $30$ & $31$ & $32$ & $33$ & $100$  \\
$b_4(n)$ & $0$ & $1$ & $0$ & $0$ & $1$ & $0$
    & $1$ & $1$ & $0$ & $1$ & $0$ & $0$ & $0$ & $1$ & $0$ & $0$ & $1$  \\
\end{tabular}

\paragraph{Substitutions.} Let $\zeroone^*$ denote the set of finite length words in $\zeroone$, and let $\pil:\zeroone^*\To\zeroone^*$ be the map given by $\pil(0)=010^{\ell-2}$, $\pil(1)=101^{\ell-2}$, and for any other word $w=s_0\ldots s_n$, we have that $\pil(w)$ is the concatenation
$$\pil(s_0)\ldots \pil(s_n).$$
The substitution $\pi_\ell$ is closely related to $b_\ell$. For example, for $\ell=4$, here are the first few iterates of $\pi_\ell$ on the length-$1$ word $0$.
$$\pi_4(0)=0100,$$
$$\pi_4(\pi_4(0))=0100101101000100,$$
$$\pi_4(\pi_4(\pi_4(0)))=0100101101000100
                         1011010010111011
                         0100101101000100
                         0100101101000100.$$
These iterates visibly converge to the infinite word corresponding to $b_4$.

%\paragraph{Defining $\nnn$.}
\begin{definition}
\label{definition:nnn}
Fix an enumeration $\Lam=\left\{\alpha_0,\ldots,\alpha_{\ell-1}\right\}$ of $\Lam$, and set $\|\alpha_i\|=i$ for all $\alpha_i\in\Lam$. Recall that any $g\in G$ may be written as $\muv t^k$ for a unique $\muv\in \reppy$ and $k\in \ZZ$. Let $\nnn:G\To\ZZgo$ be defined as follows.
$$\nnn\left(\muv t^k\right)=\sum_{n\geq k}\left\|\left(\muv\right)_n\right\|\ell^{n-k}$$
for any $\muv\in \reppy$ and $k\in \ZZ$.
\end{definition}

The function $\nnn$ is essentially the horizontal coordinate of $g$ in the standard drawing of the Cayley graph of $G$. We now define our alleged element of $\Cong$.

\begin{figure}[t]
\labellist
\small\hair 2pt

%\pinlabel $g[\lambda^2]_1$ at 135 160
\endlabellist

\centering
\centerline{\psfig{file=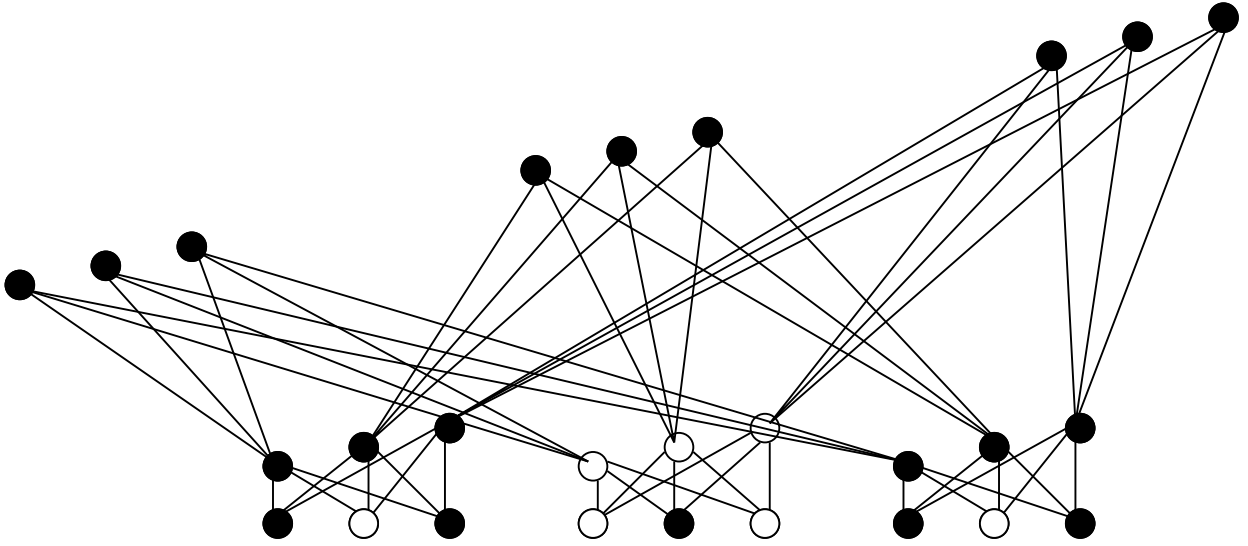,scale=90}}
\caption{A depiction of $\sigo$ when $\ell=3$, drawn on a fragment of the Cayley graph with respect to the generating set $\Lam_0 t$. The vertices depicted are of the form $\muv t^k$ where $\muv\in\Lam_0\oplus\Lam_1\oplus\Lam_2$ and $t\in\{0,1,2\}$. The bottom row depicts vertices in $\Lam_0\oplus\Lam_1\oplus\Lam_2$, arranged in order of increasing $\nnn$ value. Filled and unfilled circles represent $\sigo$ values of $0$ and $1$ respectively.}
\label{figure:conformistconfiguration}
\end{figure}

\begin{definition}
\label{definition:sigo}
Let $\sigo\in\zeroone^G$ be $b_\ell\circ\nnn$.
\end{definition}

In Proposition \ref{proposition:nonempty}, we will show that $\sigo\in\Cong$. We need two ingredients first.

\begin{lemma}
\label{lemma:nnn}
Let $g=\muv t^k$ for $\muv\in\reppy$ and $k\in\ZZ$, and let $\lam\in\Lam$. Then
$$
\nnn\left(gt^{-1}[\lam]_0\right)
=\nnn(g)\ell+\left\|\left(\muv\right)_{k-1}\lam\right\|.$$
Hence, for any $g\in G$, the restriction $\nnn|_{\rolemod(g)}$ is injective with image $\left\{\nnn(g)\ell,\ldots,\nnn(g)\ell+\ell-1\right\}$.
\end{lemma}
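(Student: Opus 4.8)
The plan is a direct computation in the semidirect product, with no clever ideas required. First I would put the element $gt^{-1}[\lam]_0$ into the normal form $\muv' t^{k-1}$. Writing $g=\muv t^k$, we have $gt^{-1}=\muv t^{k-1}$, and iterating the relation $t[\lam]_i=[\lam]_{i+1}t$ (valid for all $i\in\ZZ$) gives $t^{\,k-1}[\lam]_0=[\lam]_{k-1}t^{\,k-1}$, so that
$$gt^{-1}[\lam]_0=\muv t^{k-1}[\lam]_0=\muv[\lam]_{k-1}t^{\,k-1}.$$
The point to record here is that $\muv[\lam]_{k-1}\in\reppy$ agrees with $\muv$ in every coordinate except the $(k-1)$-st, where its value is $(\muv)_{k-1}\lam$.

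Next I would substitute this normal form into Definition \ref{definition:nnn}:
$$\nnn\left(gt^{-1}[\lam]_0\right)=\sum_{n\geq k-1}\left\|\left(\muv[\lam]_{k-1}\right)_n\right\|\ell^{\,n-(k-1)}=\left\|(\muv)_{k-1}\lam\right\|+\sum_{n\geq k}\left\|(\muv)_n\right\|\ell^{\,n-k+1},$$
where I have peeled off the $n=k-1$ term. Pulling a factor of $\ell$ out of the remaining sum turns it into $\ell\sum_{n\geq k}\|(\muv)_n\|\ell^{\,n-k}=\ell\,\nnn(g)$, which yields the displayed identity $\nnn(gt^{-1}[\lam]_0)=\nnn(g)\ell+\|(\muv)_{k-1}\lam\|$.

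For the ``hence'' clause, I would observe that $\rolemod(g)=gt^{-1}\Lam_0=\{gt^{-1}[\lam]_0:\lam\in\Lam\}$, so the identity just proved exhibits $\nnn|_{\rolemod(g)}$ as the map $\lam\mapsto \nnn(g)\ell+\|(\muv)_{k-1}\lam\|$. Left multiplication by the fixed element $(\muv)_{k-1}$ permutes $\Lam$, and $\lam\mapsto\|\lam\|$ is by construction a bijection $\Lam\To\ellint$; hence $\lam\mapsto\|(\muv)_{k-1}\lam\|$ is a bijection onto $\{0,\ldots,\ell-1\}$, and therefore $\nnn|_{\rolemod(g)}$ is injective with image exactly $\{\nnn(g)\ell,\ldots,\nnn(g)\ell+\ell-1\}$.

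There is no real obstacle; the only thing to be careful about is keeping the index shift $k\mapsto k-1$ consistent and checking that conjugating $[\lam]_0$ by $t^{\,k-1}$ lands it in slot $k-1$ (and not $1-k$ or $-(k-1)$), which is exactly what the relation $t[\lam]_i=[\lam]_{i+1}t$ from the preliminaries guarantees.
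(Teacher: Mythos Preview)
Your proof is correct and follows essentially the same route as the paper's own argument: both put $gt^{-1}[\lam]_0$ into the normal form $\muv[\lam]_{k-1}t^{k-1}$, apply Definition~\ref{definition:nnn}, split off the $n=k-1$ term, and then observe that left multiplication by $(\muv)_{k-1}$ permutes $\Lam$ so that $\|(\muv)_{k-1}\lam\|$ ranges over $\ellint$. Your write-up is slightly more explicit about the index bookkeeping, but there is no substantive difference.
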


\begin{proof}
We compute
$$\nnn\left(gt^{-1}[\lam]_0\right)=\nnn\left(\muv t^k t^{-1} [\lam]_0\right)=\nnn\left(\muv t^{k-1}[\lam]_0\right)=\nnn\left(\muv [\lam]_{k-1} t^{k-1}\right)$$
$$
=\sum_{n\geq k-1}\left\|\left(\muv[\lam]_{k-1}\right)_{n}\right\|\ell^{n-k+1}
=\sum_{n\geq k}\left\|\left(\muv\right)_n\right\|\ell^{n-k+1}+\left\|\left(\muv[\lam]_{k-1}\right)_{k-1}\right\|\ell^0$$
$$=\nnn(g)\ell+\left\|\left(\muv\right)_{k-1}\lam\right\|.$$

As $\lam$ ranges over $\Lam$, $\left\|\left(\muv\right)_{k-1}\lam\right\|$ takes on each value of $\ellint$ exactly once, and we obtain the desired result for the restriction $\nnn|_{\rolemod(g)}$
\end{proof}

\begin{lemma}
\label{lemma:bell}
For $n\in\ZZgo$ and $j\in\ellint$, $b_\ell(n\ell+j)$ is equal to the $j$-th letter of $\pil(b_\ell(n))$. 
\end{lemma}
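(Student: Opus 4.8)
The plan is to reduce everything to a single elementary fact about base-$\ell$ expansions: for $n\in\ZZgo$ and $j\in\ellint$, the base-$\ell$ expansion of $n\ell+j$ is obtained from that of $n$ by appending the digit $j$ on the right. First I would record this carefully; the only point needing a word of care is the boundary case $n=0$, where the expansion of $n$ is the single digit $0$ and that of $n\ell+j=j$ is the single digit $j$ (leading zeros, being zero, do not affect the count either way). In all cases, the number of $1$'s in the base-$\ell$ expansion of $n\ell+j$ equals the number of $1$'s in the expansion of $n$, plus $1$ if $j=1$ and plus $0$ otherwise.

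Second, translating this through Definition \ref{definition:bell}, it says exactly that $b_\ell(n\ell+j)=b_\ell(n)$ when $j\neq 1$, while $b_\ell(n\ell+j)=1-b_\ell(n)$ when $j=1$.

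Third, I would read off the $j$-th letter of $\pil(s)$ for $s\in\zeroone$ directly from the defining formulas $\pil(0)=010^{\ell-2}$ and $\pil(1)=101^{\ell-2}$, indexing the letters of these length-$\ell$ words by $\ellint$ (i.e.\ starting from $0$, as is forced by the range of $j$). Inspecting the two words: the $j$-th letter of $\pil(0)$ equals $0$ unless $j=1$, in which case it equals $1$; symmetrically, the $j$-th letter of $\pil(1)$ equals $1$ unless $j=1$, in which case it equals $0$. In other words, for either $s\in\zeroone$, the $j$-th letter of $\pil(s)$ equals $s$ when $j\neq 1$ and equals $1-s$ when $j=1$.

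Finally, specializing the third step to $s=b_\ell(n)$ and comparing with the second step yields $b_\ell(n\ell+j)=$ the $j$-th letter of $\pil(b_\ell(n))$, as claimed. I do not anticipate any real obstacle here: the argument is a bookkeeping check, and the only things warranting explicit mention are the $n=0$ edge case in the first step and the $0$-indexing convention for the letters of $\pil(0)$ and $\pil(1)$ in the third.
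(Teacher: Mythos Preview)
Your proposal is correct and follows essentially the same approach as the paper: split into the cases $j\neq 1$ and $j=1$, observe that appending the digit $j$ to the base-$\ell$ expansion of $n$ changes the count of $1$'s by $0$ or $1$ accordingly, and match this against the explicit form of $\pil(0)$ and $\pil(1)$. Your version is slightly more explicit about the $n=0$ edge case and the $0$-indexing convention, but the argument is the same.
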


\begin{proof}
If $j\neq 1$, then $n\ell+j$ and $n$ have the same number of $1$s in their base-$\ell$ expansion, so $b(n\ell+j)=b(n)$ which equals the $j$-th letter of $\pil(b(n))$ by definition of $\pil$.

On the other hand, if $j=1$, then $n\ell+j$ has one more $1$ than $n$ in its base-$\ell$ expansion. By looking at the definition of $\pil$ and $b_\ell$, it follows again that $b(n\ell+j)$ is the $j$-th letter of $\pil(b(n))$ (as both are equal to the element of $\zeroone$ which is not $b(n)$).
\end{proof}

Combining the preceding lemmas, we now obtain the desired result that $\Cong$ is nonempty.

\begin{proposition}
\label{proposition:nonempty}
For $\sigo$ as defined in Definition \ref{definition:sigo}, we have $\sigo\in\Cong$.
\end{proposition}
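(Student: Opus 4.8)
The plan is to verify the two defining conditions of $\Cong$ (Definition \ref{definition:conformist}) directly for $\sigo=b_\ell\circ\nnn$: that for every $g\in G$ the restriction $\sigo|_{\rolemod(g)}$ has a NUSM, and that $\sigo(g)=\Maj\left(\sigo|_{\rolemod(g)}\right)$. Lemmas \ref{lemma:nnn} and \ref{lemma:bell} were arranged precisely so that this becomes a short bookkeeping computation.

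Fix $g\in G$ and abbreviate $n=\nnn(g)$. By Lemma \ref{lemma:nnn} the map $\nnn|_{\rolemod(g)}$ is a bijection of $\rolemod(g)$ onto $\left\{n\ell,\ldots,n\ell+\ell-1\right\}$, so since $\sigo=b_\ell\circ\nnn$ we get, for each $a\in\zeroone$,
$$\#\left\{x\in\rolemod(g):\sigo(x)=a\right\}=\#\left\{j\in\ellint:b_\ell(n\ell+j)=a\right\}.$$
By Lemma \ref{lemma:bell}, $b_\ell(n\ell+j)$ is the $j$-th letter of the length-$\ell$ word $\pil(b_\ell(n))$, so this count is exactly the number of occurrences of $a$ in $\pil(b_\ell(n))$.

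Now I would substitute the explicit values of $\pil$ on single letters. If $b_\ell(n)=0$ then $\pil(b_\ell(n))=010^{\ell-2}$, which contains one $1$ and $\ell-1$ zeros; since $\ell\geq 3$ we have $\frac{1}{2}\ell<\ell-1<\ell$, so $\sigo|_{\rolemod(g)}$ has a NUSM with $\Maj\left(\sigo|_{\rolemod(g)}\right)=0$. If $b_\ell(n)=1$ then symmetrically $\pil(b_\ell(n))=101^{\ell-2}$ has one $0$ and $\ell-1$ ones, so $\sigo|_{\rolemod(g)}$ has a NUSM with $\Maj\left(\sigo|_{\rolemod(g)}\right)=1$. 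In either case $\Maj\left(\sigo|_{\rolemod(g)}\right)=b_\ell(n)=b_\ell(\nnn(g))=\sigo(g)$, which is the second required identity. As $g$ was arbitrary, $\sigo\in\Cong$.

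There is no genuine obstacle here. The only two points needing care are that the NUSM inequalities are strict—so the standing hypothesis $\ell\geq 3$ is exactly what forces both $\ell-1>\frac{1}{2}\ell$ and $\ell-1<\ell$, which is why $\ell=2$ must be excluded—and that the cardinality count must be transported through the bijection of Lemma \ref{lemma:nnn} rather than trying to evaluate $b_\ell$ directly on the coset $\rolemod(g)$.
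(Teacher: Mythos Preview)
Your proof is correct and follows essentially the same route as the paper: transport the problem through the bijection of Lemma~\ref{lemma:nnn}, then invoke Lemma~\ref{lemma:bell} to reduce to counting letters in $\pil(0)$ and $\pil(1)$. The only difference is cosmetic---you spell out the NUSM inequalities $\frac{1}{2}\ell<\ell-1<\ell$ explicitly, whereas the paper simply asserts that $\pil(0)$ and $\pil(1)$ have NUSM with $\Maj(\pil(a))=a$.
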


\begin{proof}
We must show that for all $g\in G$, the restriction $\sigo|_{\rolemod(g)}$ has a NUSM and $$\Maj\left(\sigo|_\rolemod(g)\right)=\sigo(g).$$

By abuse of notation, view the words $\pil(0)$ and $\pil(1)$ as functions $\ellint\To\zeroone$. It is clear that both have a NUSM, and that $\Maj\left(\pil(a)\right)=a$ for $a\in\zeroone$.

For any $g\in G$, by Lemma \ref{lemma:nnn}, $\nnn$ restricts to a bijection
$$\rolemod(g)\To\left\{\ell\nnn(g),\ldots,\ell\nnn(g)+\ell-1\right\}.$$ By Lemma \ref{lemma:bell}, the restriction $b_\ell|_{\left\{\ell\nnn(g),\ldots,\ell\nnn(g)+\ell-1\right\}}$ has a NUSM and
$$\Maj\left(b_\ell|_{\left\{\ell\nnn(g),\ldots,\ell\nnn(g)+\ell-1\right\}}\right)=b_\ell(\nnn(g)).$$ Combining these observations, it follows that $\sigo|_{\rolemod(g)}$ has a NUSM and $\Maj\left(\sigo|_{\rolemod(g)}\right)=\sigo(g)$. Thus, $\sigo\in\Cong$ as desired.
\end{proof}

\section{Weak aperiodicity of the conformist subshift.}
\label{section:aperiodic}
We now see that no element of the conformist subshift $\Cong$ can be fixed by a finite index subgroup of $G$. The structure of the proof is as follows.
\begin{itemize}
\item Lemma \ref{lemma:constant} establishes the key fact that for any $\sig\in\Cong$, $g\in G$ and $\muv\in\repmin$, we have $\sig\left(g\muv\right)=\sig(g)$.
\item Lemma \ref{lemma:product} shows that if $\Gam$ is a finite index normal subgroup of $G$, and $L=\Gam\cap\reppy$, then any $\muv\in\reppy$ can be written as $\muv_L\muv_-$ for some $\muv_L\in L$ and $\muv_-\in\repmin$.
\item Proposition \ref{proposition:aperiodic} combines these two facts with the definition of $\Cong$ to show that no element of $\Cong$ is fixed by a finite index subgroup of $G$.
\end{itemize}

\begin{lemma}
\label{lemma:constant}
Let $n\in\NN$. For $\sig\in\Cong$, $g\in G$, and $\muv\in\Lam_{-1}\Lam_{-2}\ldots\Lam_{-n}$, we have
$$\sig\left(g\muv\right)=\sig(g).$$
\end{lemma}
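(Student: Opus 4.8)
The plan is to induct on $n$, the case $n=0$ being vacuous. The engine of the whole argument is a single coset identity in $G$: for every $h\in G$ and $\lam\in\Lam$,
$$\rolemod(h[\lam]_{-1})=\rolemod(h).$$
Indeed $\rolemod(h[\lam]_{-1})=h[\lam]_{-1}t^{-1}\Lam_0$, and pushing $[\lam]_{-1}$ past $t^{-1}$ (using $t[\lam]_i t^{-1}=[\lam]_{i+1}$) gives $[\lam]_{-1}t^{-1}=t^{-1}[\lam]_0$; since $[\lam]_0\in\Lam_0$ it is absorbed, so $\rolemod(h[\lam]_{-1})=ht^{-1}\Lam_0=\rolemod(h)$. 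Because any $\sig\in\Cong$ satisfies $\sig(x)=\Maj(\sig|_{\rolemod(x)})$ for all $x$, this immediately yields the case $n=1$: $\sig(g[\lam]_{-1})=\sig(g)$, hence $\sig(g\muv)=\sig(g)$ for any $\muv$ supported on $\{-1\}$.

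For the inductive step, assume the statement for $n-1$ and let $\muv\in\Lam_{-1}\Lam_{-2}\cdots\Lam_{-n}$. Conjugation by $t$ shifts all supporting indices up by one, so $t\muv t^{-1}$ is supported on $\{0,-1,\ldots,-(n-1)\}$ and factors as $[\lam]_0\,\nu$ with $\lam\in\Lam$ and $\nu\in\Lam_{-1}\cdots\Lam_{-(n-1)}$. Using $\muv t^{-1}=t^{-1}(t\muv t^{-1})$, the fact that $\nu$ commutes with $\Lam_0$, and the absorption $[\lam]_0\Lam_0=\Lam_0$, I would compute
$$\rolemod(g\muv)=g\muv t^{-1}\Lam_0=gt^{-1}[\lam]_0\nu\Lam_0=gt^{-1}\Lam_0\nu=\rolemod(g)\nu.$$
Thus right multiplication by $\nu$ is a bijection of $\rolemod(g)$ onto $\rolemod(g\muv)$, and the inductive hypothesis, applied at each point $x\in\rolemod(g)$ with $\nu$ in the role of $\muv$, gives $\sig(x\nu)=\sig(x)$. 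Hence $\sig$ has exactly the same distribution of values on $\rolemod(g\muv)$ as on $\rolemod(g)$; both sets have cardinality $\ell$, and since $\sig\in\Cong$ both restrictions have a non-unanimous strict majority, so $\Maj(\sig|_{\rolemod(g\muv)})=\Maj(\sig|_{\rolemod(g)})$. Therefore $\sig(g\muv)=\Maj(\sig|_{\rolemod(g\muv)})=\Maj(\sig|_{\rolemod(g)})=\sig(g)$, closing the induction.

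I do not expect a real obstacle; the argument is a short computation once the right intermediate statement is isolated. The only points needing care are the index bookkeeping when commuting group elements past $t^{-1}$ and conjugating by $t$, and the observation---which is really the conceptual content---that a coordinate lying one step to the left of $\rolemod(g)$ does not belong to $\rolemod(g)$, so altering a configuration of $\Cong$ there leaves $\rolemod(g)$, and hence $\sig(g)$, untouched; iterating this is exactly what makes right translation by $\repmin$ act trivially on configurations of $\Cong$.
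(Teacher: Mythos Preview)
Your proof is correct and follows essentially the same inductive strategy as the paper: show that $\rolemod(g\muv)$ is a right translate of $\rolemod(g)$ by an element of shallower negative support, then apply the inductive hypothesis pointwise on $\rolemod(g)$ and read off the majority. The only cosmetic difference is that the paper first reduces the inductive step to the single-generator case $\muv=[\lam]_{-(n+1)}$, whereas you handle the full $\muv$ at once via the factorization $t\muv t^{-1}=[\lam]_0\nu$; both routes lead to the same computation.
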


\begin{proof}
We proceed by induction on $n$. For $g\in G$ and $\lam\in \Lam$,
$$\rolemod\left(g[\lam]_{-1}\right)=g[\lam]_{-1}t^{-1}\Lam_0=gt^{-1}[\lam]_0\Lam_0=gt^{-1}\Lam_0=\rolemod(g),$$
so
$$\sig(g)=\Maj\left(\sig|_{\rolemod(g)}\right)=\Maj\left(\sig|_{\rolemod\left(g[\lam]_{-1}\right)}\right)=\sig\left(g[\lam]_{-1}\right).$$
This establishes, as a base case, that $\sig\left(g\muv\right)=\sig(g)$ for any $g\in G$ and $\muv\in\Lam_{-1}$.

Suppose by inductive hypothesis that $\sig\left(g\muv\right)=\sig(g)$ for all $g\in G$ and $\muv\in\Lam_{-1}\ldots\Lam_{-n}$. We must show that the same equality holds for any $g\in G$ and $\muv\in\Lam_{-1}\ldots\Lam_{-(n+1)}$. It suffices to consider the case where $\muv=[\lam]_{-(n+1)}$ for some $\lam\in\Lam$. We see that
$$\rolemod\left(g[\lam]_{-(n+1)}\right)=g[\lam]_{-(n+1)}t^{-1}\Lam_0
=gt^{-1}\Lam_0 [\lam]_{-n}=\rolemod(g)[\lam]_{-n}.$$
\noindent For any $g'\in\rolemod(g)$, our inductive hypothesis implies that $\sig\left(g'[\lam]_{-n}\right)=\sig(g')$. It follows that
$$\sig\left(g[\lam]_{-(n+1)}\right)=\Maj\left(\sig|_{\rolemod\left(g[\lam]_{-(n+1)}\right)}\right)=\Maj\left(\sig|_{\rolemod(g)[\lam]_{-n}}\right)=\Maj\left(\sig|_{\rolemod(g)}\right)=\sig(g)$$
as desired.
\end{proof}

\begin{lemma}
\label{lemma:product}
Let $\Gam$ be a finite index normal subgroup of $G$, and let $L=\Gam\cap\reppy$. Then every $\muv\in\reppy$ is equal to some product $\muv_L\muv_-$ where $\muv_L\in L$ and $\muv_-\in\repmin$.
 %The group generated by $L$ and $\repmin$ is all of $\reppy$.
\end{lemma}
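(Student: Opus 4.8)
The plan is to prove the apparently stronger statement that $L\repmin = \reppy$; the lemma is then just the unwinding of this equation, since "$\muv\in L\repmin$" means precisely "$\muv = \muv_L\muv_-$ for some $\muv_L\in L$ and $\muv_-\in\repmin$." Write $M := L\repmin$. I would first record two routine ingredients: because $\Gam$ and $\reppy$ are both normal in $G$, their intersection $L$ is normal in $\reppy$ (so $M$ is genuinely a subgroup) and is $t$-invariant, as $tLt^{-1} = t\Gam t^{-1}\cap t\reppy t^{-1} = \Gam\cap\reppy = L$; and $[\reppy:M]\le[\reppy:L]\le[G:\Gam]<\infty$, so $M$ has finite index in $\reppy$.

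The heart of the argument is to compare $M$ with its $t$-conjugate. Since $t[\lam]_i t^{-1}=[\lam]_{i+1}$, conjugation by $t$ carries $\repmin=\langle\Lam_{-1},\Lam_{-2},\ldots\rangle$ into the strictly larger group $\langle\Lam_0,\Lam_{-1},\ldots\rangle$ while fixing $L$, so $M\subseteq tMt^{-1}$. On the other hand conjugation by $t$ is an automorphism of $\reppy$, hence preserves indices, so $[\reppy:tMt^{-1}]=[\reppy:M]<\infty$; a finite-index subgroup contained in another of the same index must coincide with it, whence $M=tMt^{-1}$. Iterating gives $M=t^nMt^{-n}=L\langle\Lam_{n-1},\Lam_{n-2},\ldots\rangle$ for every $n\ge0$, and as $n\to\infty$ these subgroups exhaust $\reppy$; therefore $M=\reppy$, which is exactly the claim.

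I do not anticipate a real obstacle here. The one point that needs care is the direction of the monotonicity: conjugating $\repmin$ by $t$ (rather than by $t^{-1}$) \emph{enlarges} it, and this is precisely what makes the chain $M\subseteq tMt^{-1}\subseteq\cdots$ increasing, so that finiteness of the index forces it to terminate at $\reppy$. One could equally well replace the "equal finite index forces equality" step by the remark that an increasing chain of subgroups lying above a fixed finite-index subgroup must stabilize while having union $\reppy$; the two formulations are interchangeable.
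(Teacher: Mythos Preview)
Your argument is correct. The only phrasing I would tighten is the sentence ``as $n\to\infty$ these subgroups exhaust $\reppy$'': since you have just shown that all of the groups $t^nMt^{-n}$ coincide with $M$, it reads oddly to speak of them exhausting anything. What you mean (and what makes the argument work) is that the groups $L\langle\Lam_{n-1},\Lam_{n-2},\ldots\rangle$, \emph{a priori}, have union $\reppy$; since each of them equals $M$, the union is also $M$, whence $M=\reppy$.

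Your route differs from the paper's. The paper observes that, because $\reppy/L$ is finite, conjugation by $t$ has some finite order $d$ as an automorphism of $\reppy/L$; then for a given $\muv$ one simply takes $\muv_- := t^{-kd}\muv t^{kd}$ with $k$ large enough to land in $\repmin$, and notes that $\muv$ and $\muv_-$ lie in the same $L$-coset because $t^d$ acts trivially modulo $L$. This is a direct, element-by-element construction. Your proof is the structural version of the same idea: rather than extracting a period $d$, you use the equal-finite-index squeeze to show $M=L\repmin$ is $t$-invariant, and then let the $t$-conjugates of $\repmin$ sweep out $\reppy$. The paper's version is slightly more explicit (it hands you $\muv_-$), while yours packages the conclusion as the clean subgroup identity $L\repmin=\reppy$; both rest on the same finiteness, and neither is materially shorter than the other.
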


\begin{proof}
Since $L$ is normal in $\reppy$, $t$ acts by automorphisms on the finite group $\reppy/L$. Let $d$ be the order of $t$ in $\Aut\left(\reppy/L\right)$.

For any $\muv\in\reppy$, there is some $k$ such that $t^{-kd}\muv t^{kd}\in\repmin$. Let $\muv_- =t^{-kd}\muv t^{kd}$ for some such $k$. Since $t^d$ acts trivially on $\reppy/L$, we must have that $\muv$ lies in the coset $L\muv_-$, so that $\muv=\muv_L\muv_-$ for some $\muv_L\in L$ as desired.
\end{proof}

\begin{proposition}
\label{proposition:aperiodic}
A finite index subgroup $\Gam\subseteq G$ cannot fix a configuration $\sig\in\Cong$.
\end{proposition}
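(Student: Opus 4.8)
The plan is a proof by contradiction. Suppose some finite‑index subgroup $\Gam\subseteq G$ fixes a configuration $\sig\in\Cong$. Since every finite‑index subgroup of $G$ contains a finite‑index \emph{normal} subgroup (e.g.\ its normal core, the kernel of the action of $G$ on the cosets $G/\Gam$), and any subgroup of the stabilizer $\Stab(\sig)$ again fixes $\sig$, I may assume at the outset that $\Gam$ is normal in $G$. Set $L=\Gam\cap\reppy$, as in Lemma \ref{lemma:product}. Unwinding the definition of the right $G$‑action, the hypothesis that $\Gam$ fixes $\sig$ says precisely that $\sig(\gamma h)=\sig(h)$ for all $\gamma\in\Gam$ and $h\in G$; in particular $\sig$ is invariant under left multiplication by elements of $L$.

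The heart of the argument is the claim that $\sig(\muv t^{k})=\sig(t^{k})$ for every $\muv\in\reppy$ and every $k\geq 0$. To prove it, use Lemma \ref{lemma:product} to write $\muv=\muv_L\muv_-$ with $\muv_L\in L$ and $\muv_-\in\repmin$. Left‑invariance of $\sig$ under $\muv_L$ gives $\sig(\muv t^{k})=\sig(\muv_L\muv_- t^{k})=\sig(\muv_- t^{k})$. Since $k\geq 0$, conjugation by $t^{-k}$ carries $\repmin$ into itself — it shifts the (finite) support of $\muv_-$ further in the negative direction, using $t^{-k}[\lam]_i t^{k}=[\lam]_{i-k}$ — so $\muv_- t^{k}=t^{k}\bigl(t^{-k}\muv_- t^{k}\bigr)$ with $t^{-k}\muv_- t^{k}\in\repmin$. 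Applying Lemma \ref{lemma:constant} with $g=t^{k}$ then yields $\sig(\muv_- t^{k})=\sig(t^{k})$, which proves the claim.

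Finally I would extract a contradiction from the definition of $\Cong$. Take $g=t$, so that $\rolemod(t)=t\,t^{-1}\Lam_0=\Lam_0=\{[\lam]_0:\lam\in\Lam\}$. Each $[\lam]_0$ is of the form $\muv t^{k}$ with $\muv=[\lam]_0\in\reppy$ and $k=0\geq 0$, so the claim gives $\sig([\lam]_0)=\sig(t^{0})=\sig(1_G)$ for every $\lam\in\Lam$. Hence $\sig|_{\rolemod(t)}$ is constant; being unanimous on a set of size $\ell\geq 3$, it has no non‑unanimous strict majority. This contradicts the requirement in Definition \ref{definition:conformist} that $\sig|_{\rolemod(t)}$ have a NUSM, so no finite‑index subgroup of $G$ can fix an element of $\Cong$.

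The step I expect to require the most care is the bookkeeping about sides: $\Cong$ and Lemma \ref{lemma:constant} are phrased in terms of right cosets $\rolemod(g)=gt^{-1}\Lam_0$ and right multiplication, whereas ``$\Gam$ fixes $\sig$'' is a statement about left multiplication, and the factorization in Lemma \ref{lemma:product} is exactly the bridge between the two. One must also be attentive to the fact that the conjugation trick keeps $\muv_-$ inside $\repmin$ only when $k\geq 0$; this is why it suffices to test the contradiction at $g=t$, whose role models $\Lam_0$ all have $t$‑exponent zero.
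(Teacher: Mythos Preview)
Your proof is correct and follows essentially the same route as the paper's: reduce to a normal $\Gam$, use Lemma~\ref{lemma:product} to factor $\muv=\muv_L\muv_-$, combine $L$-invariance with Lemma~\ref{lemma:constant} to see that $\sig$ is constant on $\Lam_0=\rolemod(t)$, and contradict the NUSM condition. Your detour through general $k\geq 0$ is harmless but unnecessary---you only invoke the claim at $k=0$, where the conjugation step is vacuous and the argument reduces exactly to the paper's.
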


\begin{proof}
For any $\sig\in\Cong$, and $\muv_-\in\repmin$, by Lemma \ref{lemma:constant}, $\sig\left(\muv_-\right)=\sig(1_G)$. If $\sig$ is fixed by a finite index subgroup of $G$, then it is also fixed by a finite index normal subgroup $\Gam\subseteq G$, and hence by $L=\Gam\cap\reppy$. By Lemma \ref{lemma:product}, every $\muv\in\reppy$ may be represented as $\muv_L\muv_-$ where $\muv_L\in L$ and $\muv_-\in\repmin$, and hence, as $\sig=\sig\cdot\muv_L$, we have
$$\sig\left(\muv\right)=\left(\sig\cdot\muv_L\right)\left(\muv_-\right)=\sig\left(\muv_L\muv_-\right)=\sig\left(\muv_-\right)=\sig(1_G).$$

In particular, this holds for $\muv=[\lam]_0$, where $\lam\in\Lam$. Thus, $\sig|_{\Lam_0}$ does not have a NUSM (as the above calculation shows that it is unaninimous---i.e., $\sig$ takes all elements of $\Lam_0$ to the same element of $\zeroone$). But by definition of $\Cong$, the restriction of $\sig$ to $\Lam_0=\rolemod(t)$ must have a NUSM.
\end{proof}

\section{Further questions and possible generalizations.}
\label{section:questions}
A nonempty SFT $X\subseteq A^G$ is said to be {\bf strongly aperiodic} if for all $\sig\in X$, the stabilizer $\Stab_G(\sig)$ is equal to the trivial subgroup $\{1_G\}$ of $G$. The conformist subshift $\Cong$ is not strongly aperiodic. In fact, the configuration $\sigo$ of Definition \ref{definition:sigo} was shown to be in $\Cong$ in Proposition \ref{proposition:nonempty} and satisfies $\sigo\cdot t=\sigo$, as we shall now see.

Because $\sigo$ is defined as $b_\ell\circ \nnn$, it suffices to show that $\nnn(tg)=\nnn(g)$ for any $g\in G$. Given $g\in G$, write $g$ as $\muv t^k$ for $\muv\in \reppy$ and $k\in \ZZ$.  Observe that $\left(t\muv t^{-1}\right)_n=\left(\muv\right)_{n-1}$ for any $n\in\ZZ$. We now calculate
$$\nnn(tg)=\nnn\left(t\muv t^k\right)=\nnn\left(\left(t\muv t^{-1}\right)t^{k+1}\right)$$
$$=\sum_{n\geq k+1}\left\|\left(t\muv t^{-1}\right)_n\right\|\ell^{n-(k+1)}
=\sum_{(n-1)\geq k}\left\|\left(\muv\right)_{n-1}\right\|\ell^{(n-1)-k}
=\nnn(g)
$$
as desired.

Hence, the following question remains open.

% To see this, consider the configuration $\sig\in\zeroone^G$ defined by taking $\sig\left(\muv t^k\right)$ to be $1$ if
%$$\#\left\{n\in\ZZ:\left\|\left(\muv\right)_n\right\|=1\right\}$$
%is odd and $0$ else wise (recall the notation $\|\cdot\|$ from Definition \ref{definition:nnn}).
%It is not hard to see that $\sig$ lies in $\Cong$ and has nontrivial stabilizer---in particular, $\sig\cdot t=\sig$. Hence, the following question remains open.

\paragraph{Question.} Do lamplighters have strongly aperiodic SFT?\\

The domino problem for $G$ asks whether one can determine from a finite set of forbidden patterns whether the SFT they carve out is empty. One might expect that it should be possible to use our techniques to encode the dynamics of a piecewise rational affine map into forbidden patterns on a lamplighter, as done in \cite{ak} to show that Baumslag-Solitar groups do not have decidable domino problem. However, some careful thought reveals that it is not at all obvious that this is actually possible, so the following question remains open.

\paragraph{Question.} Do lamplighters have decidable domino problem?\\

We end on a pessimistic note. Our techniques likely extend to other wreath products of $\ZZ$ with amenable groups, but such wreath products are already known to have weakly aperiodic SFT. Moreover, there seems to be no way to extend our methods to more general groups which surject onto $\ZZ$. Indeed, it is hard to see how any analogue of Lemma \ref{lemma:constant} could be true except in a wreath product. Hence, although an infinite finitely presented group with no weakly aperiodic SFT must virtually surject onto $\ZZ$, we are no closer to verifying that the Carroll-Penland conjecture holds for finitely presented groups. On the other hand, we do not know a single candidate for a counterexample to the Carroll-Penland conjecture.

\bibliographystyle{plain}
\bibliography{bibliography}
\end{document}